\makeatletter \theoremstyle{plain}
\newtheorem{thm}{Theorem}[section]
\newtheorem{prop}[thm]{Proposition}
\theoremstyle{remark}
\numberwithin{equation}{section}
\theoremstyle{definition}
\newtheorem{defin}[thm]{Definition}
 \newcommand{\N}{\mathbb{N}} 
 \newcommand{\R}{\mathbb{R}}
 \newcommand{\Hyp}{\mathbb{H}}
 \newcommand{\sphere}{\mathbb{S}}
\newcommand{\tand}{\text{ and }}
 \newcommand{\Hhh}{\mathscr{H}} 
 \newcommand{\Lll}{\mathscr{L}}
 \newcommand{\dm}{\mathrm{d}}
  \newcommand{\tPi}{\tilde{\Pi}}
 \newcommand{\Diag}{\mathrm{Diag}}
\begin{document}
\title[Projections of Riemannian surfaces]{Dimensions of  projections of sets on Riemannian surfaces of constant curvature}
\author{Zolt\'an M. Balogh, Annina Iseli}

\address {Mathematisches Institut,
Universit\"at Bern,
Sidlerstrasse 5,
CH-3012 Bern,
Switzerland}

\email{zoltan.balogh@math.unibe.ch}
\email{annina.iseli@math.unibe.ch}

\keywords{Hausdorff dimension, Orthogonal projections\\
{\it 2010 Mathematics Subject Classification: 28A78} }

\thanks{ This research was partially supported by the Swiss National Science Foundation.}

\begin{abstract}
We apply the theory of Peres and Schlag to obtain generic lower bounds for Hausdorff  dimension of images of sets by orthogonal projections on simply connected two-dimensional Riemannian manifolds of constant curvature. As a conclusion we obtain appropriate versions of Marstrand's theorem, Kaufman's theorem and Falconer's theorem in the above geometrical settings. 
\end{abstract}
\maketitle
\setcounter{equation}{0}

\section{Introduction} 
Since orthogonal projections are Lipschitz maps, they decrease the Hausdorff dimension of sets. For example, if we take a set $A \subset \R^2$ with $\dim A \leq 1$  then $\dim \Pi_{\theta} (A) \leq \dim A$ for all angles $\theta \in [0,\pi )$ where $\Pi_{\theta}: \R^2 \to L_{\theta}$ is the orthogonal projection onto the line through the origin in $\R^2$ which makes an $\theta$ with the $x$-axis. Marstrand \cite{Marstrand1954} and later Kaufman \cite{Kaufman1968} proved that that there is a {\it generic lower bound} on the dimension distortion, namely that the equality $\dim \Pi_{\theta} (A) = \dim A$ holds for almost every $\theta \in [0,\pi)$. An improvement of these result estimating the size of exceptional sets is due to Falconer \cite{Falconer1982}. For higher dimensional generalization and a unified exposition of this type of results we refer to the books \cite{Mattila1995}, \cite{Mattila2015},   as well as to the expository articles \cite{FFJ2015} and \cite{Mattila2004}.

\medskip

It is a purpose of general interest to extend the above results to various settings of non-Euclidean geometries. In this sense we mention the recent works \cite{BDCFMT2013, BFMT2012, Hovila2014} for the treatment of these questions in the setting of the Heisenberg groups. Due to the complicated sub-Riemannian geometry of the Heisenberg group the above mentioned results are much weaker and much less complete than their Euclidean counterparts. It is  expected that better 
results could be obtained in the setting of Riemannian manifolds. Various questions of geometric measure theory have been already been addressed in the setting of Riemannian manifolds. This includes the work of Brothers \cite{Brothers1966, Brothers1969}Ê in connection to Besicovitch-Federer type  characterization of purely unrectifiable sets in terms of projections  in the setting of homogenous spaces and also the more recent work of Hovila, J\"arvenp\"a\"a,  J\"arvenp\"a\"a and Ledrappiar \cite{HJJL2012, HJJL2} on two-dimensional Riemann surfaces. To our knowledge no Marstrand type result is yet available in the setting of curved geometries. The purpose of this note is a first step in this direction. 

Our main result shows that on simply connected two-dimensional Riemannian manifolds of constant curvature,  the same projection theorems hold as in the planar case. To formulate our main result we consider $M_K$ to be a two-dimensional simply connected Riemannian manifold with constant curvature $K$ and $p\in M_K$ be a fixed point. If $K\leq 0$ then the orthogonal projections $\Pi_{\theta}$ onto  geodesic lines $L_{\theta}$ emanating from $p$ are well defined in the whole space $M_K$. Here $L_{\theta}$ is the geodesic line in direction $\theta$ i.e. the image of the line $l_{\theta}\subset \R^2$ under the exponential map at $p$. If $K>0$ then the orthogonal projection $\Pi_{\theta}$ as above is only defined on compact sets $\Omega \subseteq B(p, \frac{\pi}{2\sqrt{K}})$. The main result of this note is formulated as follows:

\pagebreak

\begin{thm}\label{mainthm} Let $M_K$ be a complete, simply connected two-dimensional Riemannian manifold with constant curvature $K$, $p\in M_K$ a base point, and $\Omega$ be a compact subset of $M_K$. If $K>0$ we assume 
that  $\Omega \subseteq B(p, \frac{\pi}{2\sqrt{K}})$.  Denote by $\Pi_{\theta}$ the orthogonal projection onto the geodesic line $L_{\theta}$ emanating from $p$ in direction $\theta$.
Then for all Borel sets $A\subseteq \Omega$ the following statements hold. \begin{enumerate}
\item If $\dim A > 1$, then
\begin{enumerate}
\item  $\Lll^1 (\Pi_\theta A)>0$ for $\mathscr{L}^1$-a.e. $\theta\in (0,\pi)$.
\item $ \dim\{\theta \in (0,\pi)\ : \ \mathscr{L}^1(\Pi_\theta A)=0\}\leq 2-\dim A$.
\end{enumerate} 
\item If $\dim A \leq 1$, then
\begin{enumerate}
\item  $\dim (\Pi_\theta A)=\dim A$ for $\mathscr{L}^1$-a.e. $\theta\in (0,\pi)$.
\item For $0<\alpha\leq\dim A$, $\dim\{\theta \in (0,\pi)\ : \ \dim(\Pi_\theta A)<\alpha\}\leq \alpha.$
\end{enumerate}
\end{enumerate}
\end{thm}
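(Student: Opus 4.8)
The plan is to recast the family $\{\Pi_\theta\}_{\theta\in(0,\pi)}$ as a one-parameter family of scalar-valued maps to which the transversality machinery of Peres and Schlag applies, and then to read off all four assertions from a single application of their dimension estimate. First I would fix geodesic polar coordinates $(r,\phi)$ centered at $p$ and parametrize each geodesic line $L_\theta$ by signed arclength, so that $\Pi_\theta\colon\Omega\to\R$ becomes a genuine real-valued map and $\theta$ plays the role of the transversality parameter. Since $\Omega$ is compact (and contained in $B(p,\tfrac{\pi}{2\sqrt K})$ when $K>0$), the exponential chart at $p$ is a bi-Lipschitz diffeomorphism onto its image with all derivatives controlled; because Hausdorff dimension is a bi-Lipschitz invariant, it is harmless to run the entire argument in this chart and to pass freely between the intrinsic distance and its Euclidean companion.

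Second, I would derive a closed form for the projection. Applying the right-triangle relations of constant-curvature trigonometry to the geodesic triangle with vertices $p$, $x$ and the foot $\Pi_\theta(x)$ (right angle at the foot), one obtains
\[
  t_K\bigl(\Pi_\theta(x)\bigr)=t_K(r_x)\,\cos(\theta-\phi_x),
\]
where $t_K(s)=\tfrac{1}{\sqrt K}\tan(\sqrt K\,s)$ for $K>0$, $t_K(s)=s$ for $K=0$, and $t_K(s)=\tfrac{1}{\sqrt{|K|}}\tanh(\sqrt{|K|}\,s)$ for $K<0$. This is the exact analogue of the Euclidean identity $\Pi_\theta(x)=r_x\cos(\theta-\phi_x)$, the only difference being the nonlinear ``wrapping'' $t_K$. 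With this in hand I form the Peres--Schlag quotient
\[
  \Phi_{x,y}(\theta)=\frac{\Pi_\theta(x)-\Pi_\theta(y)}{\dist(x,y)}
\]
and must verify the transversality and regularity (bounded-derivative) hypotheses uniformly over $x\neq y$ in $\Omega$ and $\theta\in(0,\pi)$.

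Third --- and this is the technical heart --- I would establish transversality: whenever $|\Phi_{x,y}(\theta)|$ is small, $|\partial_\theta\Phi_{x,y}(\theta)|$ is bounded below. In the flat case this is immediate, since $\Pi_\theta(x)-\Pi_\theta(y)$ is a pure sinusoid in $\theta$ whose amplitude equals $\dist(x,y)$, so $\Phi_{x,y}$ is a cosine and $|\Phi_{x,y}|^2+|\partial_\theta\Phi_{x,y}|^2\equiv 1$. The wrapping $t_K$ destroys this exact identity, so instead I would analyze the configuration at a near-zero of $\Phi_{x,y}$: there $x$ and $y$ have nearly coincident feet on $L_\theta$ and hence lie near the common perpendicular geodesic, while differentiating the defining identity yields $t_K'(\Pi_\theta(x))\,\partial_\theta\Pi_\theta(x)=-t_K(r_x)\sin(\theta-\phi_x)$, a quantity comparable (on compacta) to the signed height of $x$ above $L_\theta$. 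When the feet coincide, the difference of these heights --- with its sign encoding whether $x,y$ lie on the same or opposite sides --- is precisely what realizes $\dist(x,y)$ along the perpendicular, so $|\partial_\theta\Phi_{x,y}|\gtrsim 1$. Compactness of $\Omega$, together with the restriction $r<\tfrac{\pi}{2\sqrt K}$ for $K>0$ which keeps $t_K(r)$ finite and $t_K'$ bounded away from $0$ and $\infty$, upgrades this to a uniform constant; the same bounds on $t_K$, $t_K'$ and their higher derivatives supply the regularity estimates on $\partial_\theta^\ell\Phi_{x,y}$.

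Finally, with transversality and regularity in place I would invoke the Peres--Schlag theorem: lifting a Frostman measure on $A$ through $\Pi_\theta$ and estimating the Sobolev dimension of the push-forwards yields, in one stroke, statements (1)(a)--(b) in the regime $\dim A>1$ and (2)(a)--(b) in the regime $\dim A\le 1$, the exceptional-set bounds being exactly the quantitative output of the machinery. I expect the decisive difficulty to lie in the uniform transversality estimate of the third step: controlling the interaction between the nonlinearity $t_K$ and the normalization $\dist(x,y)$ uniformly near the diagonal, where numerator and denominator degenerate simultaneously, is the point at which the constant-curvature geometry must be used in an essential way.
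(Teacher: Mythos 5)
Your overall architecture matches the paper's: verify the Peres--Schlag hypotheses (bounded derivatives, transversality, $\infty$-regularity) for the family of geodesic projections via the constant-curvature right-triangle identity $t_K(\Pi_\theta x)=t_K(r_x)\cos(\theta-\phi_x)$, then quote their theorem. But there is a genuine gap at the decisive step, and it stems from a missed observation. Having derived that identity, you form the Peres--Schlag quotient with the \emph{unwrapped} projection $\Pi_\theta$ and remark that the wrapping $t_K$ ``destroys'' the exact sinusoid structure. The paper does the opposite: it applies the Peres--Schlag machinery to the wrapped family $\tPi_\theta:=t_K\circ\Pi_\theta$. Since a difference of two cosines of the same frequency is again a cosine, one has \emph{exactly}
\begin{equation*}
\tPi_\theta x-\tPi_\theta y=D(x,y)\cos\bigl(\theta-\hat{\theta}(x,y)\bigr),
\end{equation*}
so every $\theta$-derivative of the quotient is $\pm\frac{D(x,y)}{\dist(x,y)}$ times $\sin$ or $\cos$ of the same phase, and both transversality and $\infty$-regularity collapse to the single amplitude estimate $c\le D(x,y)/\dist(x,y)\le C$, proved by the hyperbolic/spherical law of cosines plus compactness (Propositions~\ref{prop_claim_1_2} and~\ref{prop_claim_1_2_sph} of the paper); the four conclusions then transfer from $\tPi_\theta$ back to $\Pi_\theta$ because $\tanh$, respectively $\tan$, is bi-Lipschitz on the relevant compact range. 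In other words, the near-diagonal degeneration you single out as ``the decisive difficulty'' is resolved by a clean algebraic fact that your own formula already contains.

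As written, your third step does not prove transversality. The height argument is carried out only when the feet of $x$ and $y$ coincide exactly, and the assertion that ``compactness of $\Omega$ \dots upgrades this to a uniform constant'' begs the question: compactness gives nothing uniform as $y\to x$, where the numerator $\Pi_\theta x-\Pi_\theta y$ and the denominator $\dist(x,y)$ degenerate together --- which is precisely what must be controlled, as you yourself concede in your closing sentence. Likewise, $\infty$-regularity for the unwrapped quotient requires that \emph{every} $\theta$-derivative of $\Pi_\theta x-\Pi_\theta y$ be $O(\dist(x,y))$ uniformly; this does not follow merely from boundedness of $t_K$, $t_K'$ and their higher derivatives, but needs the chain rule combined with the fact that the wrapped difference and all its $\theta$-derivatives are $O(\dist(x,y))$ --- i.e., the amplitude bound again. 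Your geometric picture is sound and could be completed: in Fermi coordinates along $L_\theta$ one computes, in the hyperbolic case, $\partial_\theta\Pi_\theta x=-\cosh(\Pi_\theta x)\tanh(h_x)$ with $h_x$ the signed height, and from $|\Pi_\theta x-\Pi_\theta y|\le c'\dist(x,y)$ one deduces $|h_x-h_y|\ge(1-Cc')\dist(x,y)$ with $C$ depending only on $\Omega$, whence a uniform lower bound on $|\partial_\theta\Phi_{x,y}|$. But this quantitative near-zero analysis is exactly what the proposal defers, so the key hypotheses of the theorem you invoke are left unverified.
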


Our proof is based on the theory of Peres and Schlag \cite{PS2000} which provides a
general abstract framework of generic Hausdorff dimension distortion results in metric spaces. The statements of Threorem \ref{mainthm} will follow by the verification of the crucial conditions of regularity and transversality of projections allowing the application of the 
results from \cite{PS2000}.  This is based on considerations using hyperbolic trigonometry for the case of negative curvature and spherical trigonometry in the case of positive curvature. 

The structure of the paper is as follows:
In the first section we recall the notation and the statement of the main result from \cite{PS2000} and reduce the statement of Theorem \ref{mainthm} to the hyperbolic and spherical case.  
In the second section we prove the statement of the main theorem in the hyperbolic case and in the third section we consider the spherical case. The last section is for final remarks. 

\medskip

{\bf Acknowledegments:} We thank the referee for carefully reading the paper and for helpful
remarks improving our presentation.

\section{Preliminaries}

We will now give a short summary of Peres and Schlag's theory \cite{PS2000} and recall one of their main results that we will apply to the Riemannian setting in the following sections. A nice summary of Peres and Schlag's work (inlcuding outlines of the main proofs) can also be found in \cite{Mattila2004} or \cite{Mattila2015}.\medskip

Let $(\Omega,\dm)$ be a compact metric space, $J\subset \R$ an open interval and $\Pi$ a continuous map
\begin{equation}\label{eq_Pi}
\Pi: J\times \Omega \rightarrow \R, \ \
(\lambda, \omega) \mapsto \Pi(\lambda,\omega).
\end{equation}
We think of $\Pi$ as a family of projections $\Pi_\lambda \omega:=\Pi(\lambda,\omega)$ over the parameter interval $J$. Let $\lambda\in J$ and $\omega_1,\omega_2\in \Omega$ two distinct points. We define
\begin{equation}\label{def_Phi}
\Phi_\lambda(\omega_1, \omega_2)=\frac{\Pi_\lambda \omega_1 - \Pi_\lambda \omega_2}{\dm(\omega_1,\omega_2)}.
\end{equation}

\begin{defin}\label{def_PS}\begin{enumerate}[(a)]
\item We say that $\Pi_\lambda$ has \emph{bounded derivatives in }$\lambda$, if: For all $\omega\in\Omega$ the function $\lambda \mapsto \Pi(\lambda,\omega)$ is smooth and for all compact intervals $I\subset J$ and all $l\in \N_0$, there exists a constant $C_{l,I}$ such that for all $\lambda \in I$ and $\omega \in \Omega$,
\begin{equation*}
\left| \frac{\dm^l}{\dm\lambda^l} \Pi(\lambda,\omega) \right| \leq C_{l,I}.
\end{equation*}
\item We call $J$ an \emph{interval of transversality of order $0$ for $\Pi$}, or shorter, the \emph{transversality property} is satisfied, if there exists a constant $C'>0$, such that for all pairs of distinct points $\omega_1,\omega_2\in \Omega$ and $\lambda\in J$,
\begin{equation*}
|\Phi_\lambda(\omega_1,\omega_2)|\leq C' \Rightarrow \left|\frac{\dm}{\dm \lambda}\Phi_\lambda(\omega_1,\omega_2)\right|\geq C'.
\end{equation*}
\item We say that $\Phi$ is \emph{$\infty$-regular}, if for each $l\in\N$ there exist a constant $C_{l}$ such that for all $\lambda \in J$ and distinct points $\omega_1,\omega_2\in \Omega$,
\begin{equation*}
\left| \frac{\dm^l}{\dm\lambda^l} \Phi_\lambda(\omega_1,\omega_2) \right| \leq C_{l}.
\end{equation*}
\end{enumerate}
\end{defin}
This definition allows us to state the following theorem due to Peres and Schlag \cite{PS2000}.

\begin{thm}\label{mainthm1}
Let $\Omega$ be a compact metric space which is bi-Lipschitz equivalent to a subset of  a Euclidean space; $J$ an open interval and $\Pi$ a continuous map as described in (\ref{eq_Pi}). Assume that conditions (a), (b) and (c) of Definition~\ref{def_PS} are satisfied.
Then the following statements hold for all Borel sets $A\subseteq \Omega$. \begin{enumerate}
\item If $\dim A > 1$, then 
\begin{enumerate}
\item  $\Lll^1 (\Pi_\lambda A)>0$ for $\mathscr{L}^1$-a.e. $\lambda\in  J$,
\item $ \dim\{\lambda \in  J\ : \ \mathscr{L}^1(\Pi_\lambda A)=0\}\leq 2-\dim A$.
\end{enumerate} 
\item If $\dim A \leq 1$, then 
\begin{enumerate}
\item  $\dim (\Pi_\lambda A)=\dim A$ for $\mathscr{L}^1$-a.e. $\lambda\in  J$,
\item For $0<\alpha\leq\dim A$, $\dim\{\lambda \in  J\ : \ \dim(\Pi_\lambda A)<\alpha\}\leq \alpha$.
\end{enumerate}
\end{enumerate}
\end{thm}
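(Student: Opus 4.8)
The plan is to convert each assertion into an \emph{integrated} estimate over the parameter $\lambda$ for the push-forward measures $\mu_\lambda:=(\Pi_\lambda)_{\#}\mu$, where $\mu$ is a Frostman measure carried by $A$, and then to read off the pointwise-in-$\lambda$ conclusions by Fubini's theorem. Recall that for a finite Borel measure $\nu$ on $\R$ and $s>0$ the $s$-energy is $I_s(\nu)=\iint|x-y|^{-s}\,\dm\nu(x)\,\dm\nu(y)$; the bi-Lipschitz embedding of $\Omega$ into a Euclidean space lets us invoke Frostman's lemma and the standard energy characterization $\dim A=\sup\{s:\exists\,\mu\ \text{on}\ A\ \text{with}\ I_s(\mu)<\infty\}$, where $I_s(\mu_\lambda)<\infty$ forces $\dim\Pi_\lambda A\ge s$ and finiteness of $\|\nu\|_{L^2}$ forces $\nu\ll\Lll^1$ with $L^2$ density, hence $\Lll^1(\mathrm{supp}\,\nu)>0$. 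The single analytic input linking these quantities to Definition~\ref{def_PS} is the \emph{sublevel estimate}: for every compact $I\subset J$ there is $C$ with
\begin{equation*}
\Lll^1\{\lambda\in I:\ |\Pi_\lambda\omega_1-\Pi_\lambda\omega_2|\le\delta\}\le C\,\frac{\delta}{\dm(\omega_1,\omega_2)}\qquad(\delta>0,\ \omega_1\ne\omega_2).
\end{equation*}
I would establish this first, since everything else rests on it.

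To prove the sublevel estimate I would work with the normalized quotient $\Phi_\lambda(\omega_1,\omega_2)$ of (\ref{def_Phi}). Where $|\Phi_\lambda|\le C'$ the transversality hypothesis (b.) forces $|\tfrac{\dm}{\dm\lambda}\Phi_\lambda|\ge C'$, so $\Phi_\lambda$ has no critical point inside the band $[-C',C']$ and must cross it monotonically; the $\infty$-regularity (c.) bounds $|\tfrac{\dm}{\dm\lambda}\Phi_\lambda|\le C_1$, so each monotone traversal consumes $\lambda$-length at least $2C'/C_1$ and there are at most $\sim|I|C_1/C'$ of them. On each traversal the set $\{|\Phi_\lambda|\le\delta/\dm(\omega_1,\omega_2)\}$ has length $\lesssim\delta/\dm(\omega_1,\omega_2)$, and summing gives the claim. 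A layer-cake computation then yields, for every $s<1$, the pointwise bound $\int_I|\Pi_\lambda\omega_1-\Pi_\lambda\omega_2|^{-s}\,\dm\lambda\lesssim\dm(\omega_1,\omega_2)^{-s}$, the threshold $s<1$ being exactly where the $\delta$-integral converges.

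For the case $\dim A\le1$ I would fix any $s<\dim A$, choose by Frostman a measure $\mu$ on $A$ with $I_s(\mu)<\infty$, and combine Fubini with the previous bound to get
\begin{equation*}
\int_I I_s(\mu_\lambda)\,\dm\lambda=\iint\Big(\int_I|\Pi_\lambda\omega_1-\Pi_\lambda\omega_2|^{-s}\,\dm\lambda\Big)\dm\mu(\omega_1)\,\dm\mu(\omega_2)\lesssim I_s(\mu)<\infty.
\end{equation*}
Hence $I_s(\mu_\lambda)<\infty$, so $\dim\Pi_\lambda A\ge s$, for $\Lll^1$-a.e.\ $\lambda\in I$; letting $s\nearrow\dim A$ along a sequence and exhausting $J$ by compact intervals proves (2)(a), the reverse inequality $\dim\Pi_\lambda A\le\dim A$ being immediate from the uniform Lipschitz continuity of $\omega\mapsto\Pi_\lambda\omega$. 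The case $\dim A>1$ is the main obstacle, and it is here that all three conditions of Definition~\ref{def_PS} are genuinely needed: fixing $1<s<\dim A$ one must upgrade the energy bound to $\int_I\|\mu_\lambda\|_{L^2}^2\,\dm\lambda<\infty$, which gives $\mu_\lambda\ll\Lll^1$ and hence (1)(a). For $s>1$ the size estimate of the previous paragraph \emph{diverges}, so genuine cancellation is required: by Plancherel the problem reduces to controlling the oscillatory integral $\int_I e^{i\xi(\Pi_\lambda\omega_1-\Pi_\lambda\omega_2)}\,\dm\lambda$, and I would split the $\lambda$-range into the transversal band (where transversality bounds $|\tfrac{\dm}{\dm\lambda}(\Pi_\lambda\omega_1-\Pi_\lambda\omega_2)|$ below, and repeated integration by parts justified by (a.) and (c.) produces rapid decay in $\xi$) and its complement (where $|\Pi_\lambda\omega_1-\Pi_\lambda\omega_2|$ is bounded below and the size estimate suffices); balancing the two against $I_s(\mu)$ for $s>1$ closes the $L^2$ bound.

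Finally, for the exceptional-set estimates (1)(b) and (2)(b) I would run the same potential theory on the parameter space. To prove (2)(b), suppose the exceptional set $E=\{\lambda:\dim\Pi_\lambda A<\alpha\}$ had $\dim E>\alpha$, place on $E$ a Frostman measure $\nu$ of exponent $t$ with $\alpha<t<\dim E$, and estimate $\int_E|\Pi_\lambda\omega_1-\Pi_\lambda\omega_2|^{-\alpha}\,\dm\nu(\lambda)$. Feeding the sublevel estimate into a layer-cake computation against $\nu$ (whose mass on an interval of length $\ell$ is $\lesssim\ell^{t}$) gives, precisely because $t>\alpha$, the uniform bound $\int_E|\Pi_\lambda\omega_1-\Pi_\lambda\omega_2|^{-\alpha}\,\dm\nu(\lambda)\lesssim\dm(\omega_1,\omega_2)^{-\alpha}$. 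Integrating in $\mu\times\mu$ and comparing with $I_\alpha(\mu)<\infty$ yields $I_\alpha(\mu_\lambda)<\infty$, hence $\dim\Pi_\lambda A\ge\alpha$, for $\nu$-a.e.\ $\lambda$, contradicting $\nu(E)>0$; this forces $\dim E\le\alpha$. The statement (1)(b) follows by the identical scheme with the $L^2$/oscillatory estimate in place of the energy estimate, the exponent arithmetic producing the threshold $2-\dim A$. All constants depend only on $C'$, the $C_l$ and the compact interval $I$, so the conclusions hold on every compact subinterval and therefore on all of $J$.
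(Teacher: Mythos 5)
The first thing to note is that the paper does not prove this statement at all: Theorem~\ref{mainthm1} is imported verbatim as a result of Peres and Schlag \cite{PS2000}, and the paper's actual work consists of verifying the hypotheses of Definition~\ref{def_PS} for geodesic projections in $\Hyp^2$ and $\sphere^2$. So your proposal is not competing with an internal argument but with the (long and technical) proof in \cite{PS2000}. Much of your reconstruction is sound. The sublevel estimate $\Lll^1\{\lambda\in I: |\Pi_\lambda\omega_1-\Pi_\lambda\omega_2|\le\delta\}\lesssim \delta/\dm(\omega_1,\omega_2)$ does follow from transversality plus regularity by the monotone-crossing count you describe, and it correctly yields (2)(a) by energy/Fubini and (2)(b) by the Kaufman-type argument with a Frostman measure $\nu$ on the exceptional set (the layer-cake against $\nu$ converges exactly because $t>\alpha$). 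Your route to (1)(a) needs one repair: in the non-transversal region the phase $\Phi_\lambda$ may be stationary, so you cannot take absolute values in $\xi$ there and invoke a ``size estimate'' ($\int_{\R}|I|\,\dm\xi$ diverges). The clean fix avoids integration by parts entirely: smooth in $\xi$, writing $\int|\hat{\mu}_\lambda(\xi)|^2\psi(\xi/R)\,\dm\xi=\iint R\check{\psi}\bigl(R(\Pi_\lambda\omega_1-\Pi_\lambda\omega_2)\bigr)\dm\mu\,\dm\mu$ with $\check{\psi}\ge 0$ rapidly decreasing; the kernel is negligible where $|\Phi_\lambda|\ge C'$, the sublevel estimate bounds $\int_I R\check{\psi}(Rr\Phi_\lambda)\,\dm\lambda\lesssim 1/r$, and one concludes with $I_1(\mu)<\infty$.

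The genuine gap is (1)(b), which you claim follows ``by the identical scheme.'' It does not, and this is precisely the hard part of \cite{PS2000}. The relevant Frostman exponents are $t\in(2-\dim A,\,1)$, i.e.\ $t<1$, and in that regime both of your mechanisms fail. The band/layer-cake computation against $\nu$ gives $\int_E R\check{\psi}(Rr\Phi_\lambda)\,\dm\nu(\lambda)\lesssim R^{1-t}r^{-t}$, which blows up as $R\to\infty$ when $t<1$ (equivalently, $\int_{\R}e^{i\xi r\Phi_\lambda}\dm\xi=2\pi\delta(r\Phi_\lambda)$ and $\int_E\delta(\Phi_\lambda)\,\dm\nu=\lim_{\varepsilon\to0}\varepsilon^{-1}\nu\{|\Phi_\lambda|<\varepsilon\}\lesssim\lim_{\varepsilon\to 0}\varepsilon^{t-1}=\infty$). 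And the oscillatory route is blocked because one cannot integrate by parts against a fractal measure: $\int_E e^{ik\Phi_\lambda}\dm\nu(\lambda)$ need not decay in $k$ at all, since a $t$-Frostman $\nu$ can be essentially atomic at scale $1/k$. So the cancellation must be extracted jointly from the $\mu\times\mu$-, $\xi$- and $\nu$-integrations; this is what Peres and Schlag's Littlewood--Paley decomposition and induction-on-scales argument (where the $\lambda$-derivative bounds of Definition~\ref{def_PS}(a.), which you never really use, enter essentially) accomplishes, and already in the planar model case Falconer's $2-\dim A$ bound \cite{Falconer1982} requires a separate argument, not a rerun of Kaufman's. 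As it stands, your proposal establishes (2)(a), (2)(b) and (with the smoothing fix) (1)(a), but not (1)(b).
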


Theorem \ref{mainthm} will follow from Theorem \ref{mainthm1} once we show that for orthogonal projection on $M_{K}$ the conditions from Definition~\ref{def_PS} are satisfied. On the other hand, simply connected, complete two-dimensional Riemannian manifolds with constant curvature $K$ are isometric to $M_K^2=\Hyp^2$ endowed with the  metric $\dm_K=\frac{1}{\sqrt{-K}}\dm$, where $\dm$ denotes the hyperbolic metric on $\Hyp^2$ for $K< 0$ and $M_K^2=\sphere^2$ endowed with the  metric $\dm_K=\frac{1}{\sqrt{K}}\dm$, where $\dm$ denotes the usual spherical metric on  the $\sphere^2$ for $K>0$. This implies that it is enough to verify the conditions of Definition~\ref{def_PS}  for the cases of $\Hyp^2$ and $\sphere^2$.

\section{Projections in $\Hyp^2$}

\subsection{Geodesic projections in $\Hyp^2$}\label{secsetting_hyp}

Let $\mathbb{H}^2$ denote the hyperbolic plane and $\dm$ the hyperbolic metric on $\mathbb{H}^2$. Let $p$ be a fixed base point in $\mathbb{H}^2$ and $v_0$ a vector of length $1$ in the tangent plane $T_p\mathbb{H}^2$ of $\Hyp^2$ at $p$. We denote by $L_0^+$ the geodesic starting at $p$ in direction $v_0$ and by $L_0^-$ the geodesic starting at $p$ in the direction $-v_0$. This defines the geodesic line $L_0=L_0^+\cup L_0^-$ through $p$.
For all angles $\theta \in (0,\pi)$ define $v_\theta$ to be the unique vector of length $1$ in $T_p\mathbb{H}^2$ such that the counter-clockwise angle from $v_0$ to $v_\theta$ is $\theta$. 
Let $L_\theta^+$ be the geodesic starting from $p$ in direction $v_\theta$ and $L_\theta^-$ be the geodesic starting from $p$ in direction $-v_\theta$. This defines the geodesic line $L_\theta=L_\theta^+\cup L_\theta^-$.\medskip

For a point $q\in \mathbb{H}^2$, let $P_\theta q$ be the unique point on $L_\theta$ that minimizes the distance between $L_\theta$ and $q$. In other words, $P_\theta q$ is the unique point of $L_\theta$ that satisfies,
\begin{equation*}
\dm(q,P_\theta q) = \inf\{\dm(q,q'):q'\in L_\theta \}.
\end{equation*}
The existence and uniqueness of such a point $P_\theta q$ holds in general for negatively curved spaces (see e.g. Proposition 2.4 in \cite{BH1999}, page 176). This allows us to define the mapping $P_\theta$:
\begin{equation*} 
P_\theta: \mathbb{H}^2  \rightarrow L_\theta,\ \
q  \mapsto P_\theta q.
\end{equation*}
Proposition 2.4 of \cite{BH1999} implies, that $P_\theta$ is distance non-increasing and that for each $q\in\mathbb{H}^2$  the geodesic connecting $q$ to $P_\theta q$ is orthogonal to $L_\theta$. Therefore, we will refer to the mapping $P_\theta$ as the \emph{orthogonal projection} of $\mathbb{H}^2$ onto $L_\theta$. \medskip

In order to be consistent with the notion of projection used in \cite{PS2000} we define the generalized projection
\begin{equation}\label{def_Pi_in_H}
\Pi: (0,\pi)\times \mathbb{H}^2 \rightarrow \R, \ \ 
(\theta, q)  \mapsto \Pi_\theta q:=\pm\dm(p,P_\theta q)
\end{equation}
where the sign "$\pm$" is to be understood as follows:\[ \Pi_\theta q=\dm(p,P_\theta q)\text{ if } P_\theta q\in L_\theta^+ \text{, and }\Pi_\theta q=-\dm(p,P_\theta q)\text{ if }P_\theta q\in L_\theta^-.\]
Note that it is immediate from the definition of $\Pi_\theta$ and $P_\theta$ that \begin{equation}\label{compare_P_Pi_hyp}
\dm(P_\theta p_1, P_\theta p_2)=\dm_{Eucl.}(\Pi_\theta p_1,\Pi_\theta p_2),
\end{equation}
for all $\theta\in(0,\pi)$ and $p_1,p_2\in \mathbb{H}^2$, where $\dm_{Eucl.}$ denotes the Euclidean metric on $\R$. Moreover, note that $\Pi$ is a continuous map as described in (\ref{eq_Pi}). The interval $J$ of parameters $\lambda$ from (\ref{eq_Pi}), here is an interval $(0,\pi)$ of angles $\theta$. The fact that $P_\theta$, for all $\theta\in(0,\pi)$, is a distance non-increasing mapping, implies that $\Pi_\theta$ is distance non-increasing, i.e., $1$-Lipschitz, for all $\theta\in(0,\pi)$. In particular, this implies that the dimension of a set can not increase under the projection $\Pi_q $, $q\in\Hyp^2$.\medskip

In order to express $\Pi_\theta$ in a way that allows us to study its transversality and regularity properties, we use basic facts from hyperbolic trigonometry. 
Consider a geodesic triangle in $\Hyp^2$ with side lengths $a,b,c$ and opposite angles $\alpha, \beta, \gamma$. It holds that
\begin{equation}\label{LoC1}
\cosh a= \cosh b \cosh c -\sinh b \sinh c \cos \alpha.
\end{equation} 
This formula is called the \emph{hyperbolic law of cosines}, a proof can be found for example in \cite{BH1999} or \cite{Thurston1997}.
Applying the hyperbolic law of cosines to a right-sided triangle twice, yields
\begin{equation}\label{LoC2}
 \tanh b =\tanh c\cos \alpha,
\end{equation} 
where $\gamma=\frac{\pi}{2}$. To see this, consider a triangle as just described with $\gamma=\frac{\pi}{2}$. From (\ref{LoC1}) it follows that $\cosh c = \cosh b \cosh a$ and $\cosh a = \cosh b \cosh c -\sinh b \sinh c \cos \alpha.$ From these relations we obtain
$\frac{\cosh c}{\cosh b}  = \cosh b \cosh c -\sinh b \sinh c \cos \alpha$, which implies
$-\frac{\cosh c}{\cosh b}\sinh^2 b =-\sinh b \sinh c \cos \alpha$. Thus, (\ref{LoC2}) follows. 

Now for each point $q\in \mathbb{H}^2$ and angle $\theta \in [0,\pi)$, let us denote by $\alpha_{q,\theta}\in [0,2\pi)$ the counter-clockwise angle from $L_\theta^+$ to the geodesic segment connecting the base point $p$ to $q$. As we will show now, (\ref{LoC2}) implies that
\begin{equation}\label{pre-calcproj}
\tanh \Pi_\theta q=\tanh\dm(p,q)\cos (\alpha_{q,\theta}),
\end{equation}
for all angles $\theta\in(0,\pi)$ and all points $q\in\mathbb{H}^2$. Let $q$ be a point in $\in\mathbb{H}^2$ and $\theta \in [0,\pi)$ an angle.
First, we consider the case when $0\leq\alpha_{q,\theta}<\frac{\pi}{2}$. Then, $P_\theta(q)\in L_\theta^+$ and the three points $p$, $q$ and $P_\theta q$ span a geodesic triangle with side lengths $a=\dm(q,P_\theta q)$, $ b=\dm(p,P_\theta q)$, $c=\dm(p,q)$ and opposite angles $\alpha=\alpha_{q,\theta}$,  $\beta$, $\gamma=\frac{\pi}{2}$.
By (\ref{LoC2}), it follows that $\tanh \dm(p,P_\theta q)=\tanh\dm(p,q)\cos (\alpha_{q,\theta})$.
Hence, by the definition of $\Pi_\theta$ and the fact that $P_\theta(q)\in L_\theta^+$, we obtain (\ref{pre-calcproj}) for this case. The other cases: $\frac{\pi}{2}\leq\alpha_{q,\theta}<\pi$;  $\pi\leq\alpha_{q,\theta}<\frac{3\pi}{2}$  and  $\frac{3\pi}{2}\leq\alpha_{q,\theta} <2\pi $ can be treated similarly.\medskip

For each point $q\in\Hyp^2$, let $\theta_q \in [0,2\pi)$ be the counter-clockwise angle from $L_0^+$ to the geodesic segment connecting the base point $p$ to $q$. It is easy to see that $\cos (\alpha_{q,\theta})= \cos (\theta_q-\theta)$ for all $\theta\in (0,\pi)$. In conclusion: 
\begin{equation}\label{calcproj}
\tanh \dm(p,P_\theta q)=\tanh\dm(p,q)\cos (\theta_q-\theta).
\end{equation}

Motivated by this result, we introduce the following new family of generalized projections: \begin{equation}\label{def_Pi_in_H_new}
\tilde{\Pi}: (0,\pi)\times \mathbb{H}^2 \rightarrow \R, \ \
(\theta, q)  \mapsto \tPi_\theta q := \tanh \dm(p, q)\cos(\theta_q-\theta).
\end{equation}
Note that, for all $\theta\in(0,\pi)$ and $q\in\mathbb{H}^2$, \begin{equation}\label{compare_Pi_tPi_H}
\tPi_\theta q=\tanh(\Pi_\theta q).
\end{equation}
Thus,  $\tPi:(0,\pi)\times \Omega\rightarrow \R$ is a continuous mapping with respect to $\dm$. Moreover, note that $\tanh$ is $1$-Lipschitz on the whole of $\R$. Recall, that for all $\theta\in(0,\pi)$, $\Pi_\theta$ is $1$-Lipschitz. Therefore, $\tPi_\theta$ is $1$-Lipschitz for all $\theta \in (0,\pi)$.\medskip

Now for all angles $\theta\in (0,\pi)$ and all pairs of distinct points $p_1,p_2\in \mathbb{H}^2$ define,
\begin{equation}\label{def_Phi_hyp}
\Phi_\theta(p_1,p_2)=\frac{\tPi_\theta p_1 - \tPi_\theta p_2}{\dm(p_1,p_2)},
\end{equation}
analogous to (\ref{def_Phi}) in the general setting.

\subsection{Transversality and regularity properties in $\Hyp^2$}\label{sec_trans_hyp}
Let $\Omega$ be a compact subset of $\mathbb{H}^2$. From now on we will consider the metric space $(\Omega, \dm)$, where $\dm$ denotes the restriction of the hyperbolic metric to $\Omega$. We will consider the projections $\Pi$ and $\tPi$ as defined in (\ref{def_Pi_in_H}) and (\ref{def_Pi_in_H_new}), as well as the function $\Phi$ as defined in (\ref{def_Phi_hyp}), restricted to $\Omega$.\par

We will now show that Definition~\ref{def_PS} is satisfied in this just defined setting. For this purpose, define $\Diag:=\{(p_1,p_2)\in\Omega\times\Omega \ : \ p_1=p_2\}$.

\begin{prop}\label{prop_claim_1_2}
There exist two functions
\begin{eqnarray*}
D \ : \ (\Omega\times\Omega)\backslash \Diag &\rightarrow & \R_+\\
\hat{\theta} \ : \ (\Omega\times\Omega)\backslash \Diag &\rightarrow & [0,2\pi),
\end{eqnarray*} such that:
\begin{enumerate}
\item For all pairs of points $(p_1,p_2)\in(\Omega\times\Omega)\backslash \Diag$ and all angles $\theta \in (0,\pi)$, \begin{equation*}
\tPi_\theta p_1 -\tPi_\theta p_2 = D(p_1,p_2)\cos(\theta-\hat{\theta}(p_1,p_2)).
\end{equation*}
\item There exist constants $c>0$ and $C>0$, such that for all $(p_1,p_2)\in(\Omega\times\Omega)\backslash \Diag$,
\begin{equation*}
c\leq  \frac{D(p_1,p_2)}{\dm(p_1,p_2)}\leq C.
\end{equation*}
\end{enumerate}
\end{prop}

\begin{proof}[Proof of Proposition~\ref{prop_claim_1_2}] 
Let $(p_1,p_2)\in(\Omega \times \Omega)\backslash\Diag$. Throughout this proof, we will use the following notation:
\begin{equation}\label{not_d_tilde}
d_1=\dm(p,p_1),\ d_2=\dm(p,p_2),\ d=\dm(p_1,p_2),\ \tilde{d_1}=\tanh \dm(p_1,p),\ \tilde{d_2}=\tanh \dm(p_2,p).
\end{equation}
Moreover, we denote the counter-clockwise angle from $L_0^+$ to the geodesic segment connecting $p$ to $p_1$ (resp. $p_2$) by $\theta_1$ (resp. $\theta_2$).\par 
By (\ref{pre-calcproj}), we have $ \tPi_\theta p_1 =  \tilde{d_1} \cos (\theta -\theta_1)$ and $\tPi_\theta p_2=  \tilde{d_2} \cos (\theta-\theta_2).$ In order to make the calculations clearer, write $\alpha=\theta-\theta_2$ and $\alpha_0=\theta_1-\theta_2$. Thus we obtain
\begin{equation}\label{eq2}
\tPi_\theta p_1= \tilde{d_1} \cos (\alpha-\alpha_0),\ \ 
\tPi_\theta p_2 =\tilde{d_2} \cos (\alpha).
\end{equation}
and by an elementary calculation
\begin{equation}\label{eq4}
\tPi_\theta p_1 -\tPi_\theta p_2 = (\tilde{d_1} \cos \alpha_0 -\tilde{d_2})\cos\alpha + \tilde{d_1} \sin\alpha_0\sin\alpha.
\end{equation}

Define \begin{equation}\label{AB}
A = \tilde{d_1} \cos \alpha_0 - \tilde{d_2}, \ \
B = \tilde{d_1} \sin\alpha_0.
\end{equation} 
Note that $A$ and $B$ cannot both be $0$, since $(p_1,p_2)\notin \Diag$.
This allows us to make the following definition: Let $\hat{\alpha}\in(0,2\pi)$ be the angle that satisfies
\begin{equation}\label{thetahat}
\cos \hat{\alpha}= \frac{A}{\sqrt{A^2+B^2}} \ \ \tand \  
\sin \hat{\alpha}= \frac{B}{\sqrt{A^2+B^2}}.
\end{equation}

In this notation, from (\ref{eq4}) it follows that $\tPi_\theta p_1 -\tPi_\theta p_2 =\sqrt{A^2+B^2} \cos(\alpha-\hat{\alpha})$. Set $\hat{\theta}=\theta_2 +\hat{\alpha}$ (see below (\ref{not_d_tilde}) for the definition of $\theta_2$)  and $D=\sqrt{A^2+B^2}$. Observe that by their definition both $D$ and $\hat{\theta}$ are independent of $\theta$.  Thus $D=D(p_1,p_2)$ and $\hat{\theta}=\hat{\theta}(p_1,p_2)$ are well-defined functions on $(\Omega\times \Omega)\backslash \Diag$. Moreover, by definition of $\alpha, \hat{\alpha}$ and $\hat{\theta}$, we conclude
\begin{equation*}
\tPi_\theta p_1 - \tPi_\theta p_2=D\cos(\theta-\hat{\theta}).
\end{equation*}

This completes the proof of Proposition~\ref{prop_claim_1_2}.$(1)$. 

For the proof of Proposition~\ref{prop_claim_1_2}.$(2)$ it suffices to show that $c\leq \frac{D(p_1,p_2)}{\dm(p_1,p_2)}\leq C$ for constants $c>0$ and $C>0$ independent of $p_1$ and $p_2$, where ${D(p_1,p_2)=\sqrt{A^2+B^2}}$.\medskip

By the hyperbolic law of cosines (\ref{LoC1}) applied to the geodesic triangle spanned by $p$,$p_1$ and $p_2$, it holds that $\cosh d=\cosh d_1\cosh d_2-\sinh d_1\sinh d_2\cos\alpha_0$, 
which implies,
\begin{equation}\label{eq8}
-2\tanh d_1 \tanh d_2\cos\alpha_0= 2\left( \frac{\cosh d}{\cosh d_1 \cosh d_2}-1 \right).
\end{equation}
Applying (\ref{AB}) and (\ref{eq8}), as well as elementary product-to-sum identities for hyperbolic and trigonometric functions, yields
\begin{equation}\label{eq_A^2+B^2}
A^2+B^2 =  \frac{2 \cosh d \cosh d_1 \cosh d_2 -\cosh^2 d_1 - \cosh^2 d_2}{\cosh^2 d_1\cosh^2 d_2}.
\end{equation}

Note that the product $\cosh d_1 \cosh d_2$ is greater than $1$ and is bounded from above since $p_1,p_2\in \Omega$ and $\Omega$ is compact. So we can derive the following upper bound for $A^2+B^2$:
\begin{equation*}
A^2+B^2 \leq  \left(\frac{1}{\cosh^2 d_1}+\frac{1}{\cosh^2 d_1}\right)(\cosh d -1) \leq 2(\cosh d-1).
\end{equation*}
Hence, we conclude that
\begin{equation*}
\frac{\sqrt{A^2+B^2}}{d}\leq \sqrt{2} \frac{\sqrt{\cosh d-1}}{d}.
\end{equation*}

Note that $\frac{\sqrt{\cosh d-1}}{d}$ is a continuous function in $d>0$ and that $\lim_{d\to 0^{+}} \frac{\sqrt{\cosh d-1}}{d}=\frac{1}{\sqrt{2}}<\infty.$
Thus by the compactness of $\Omega$, we have $\frac{\sqrt{A^2+B^2}}{d}\leq C$ for some constant $C>0$ only depending on the diameter of $\Omega$. This proves the right-hand inequality in Proposition~\ref{prop_claim_1_2}.$(2)$. Now let us prove the left-hand inequality.\par 
Using the notation from (\ref{not_d_tilde}), we define $\rho=d_1-d_2$. By the triangle inequality $\rho\in [-d,d]$, i.e., $|d|\geq |\rho|$ and therefore $\cosh d \geq \cosh \rho$. The following calculation only uses the definition of $\rho$ and elementary calculation rules for $\cosh$:
\begin{equation*}\begin{split}
&2 \cosh d \cosh d_1 \cosh d_2 -\cosh^2 d_1 - \cosh^2 d_2\\
&= \ 2 \cosh d \cosh (d_2+\rho) \cosh d_2 -\cosh^2 (d_2+\rho) - \cosh^2 d_2\\
&= \ \cosh d (\cosh(2 d_2 +\rho)+\cosh \rho)- \frac{1}{2}(\cosh(2(d_2+\rho))+1) - \frac{1}{2}(\cosh(2d_2)+1) \\
&= \ \cosh d (\cosh(2 d_2 +\rho)+\cosh \rho) - \frac{1} {2}(\cosh(2(d_2+\rho)+\cosh(2d_2)) -1\\
&= \ \cosh d (\cosh(2 d_2 +\rho)+\cosh \rho)- \cosh(2 d_2+\rho)\cosh \rho -1\\
&= \cosh d \cosh \rho-1+(\cosh d-\cosh \rho)  \cosh(2 d_2+\rho)\\
&\geq \cosh d \cosh \rho-1\geq \cosh d -1.\end{split}
\end{equation*}

From the Taylor series representation of $\cosh$ it follows that $ \cosh d -1 \geq \frac{1}{2}d^2$. Consequently, the estimate, \begin{equation}\label{eq11} 2 \cosh d \cosh d_1 \cosh d_2 -\cosh^2 d_1 - \cosh^2 d_2\geq \frac{1}{2}d^2,
\end{equation} follows. Now, since $p_1,p_2\in \Omega$ and $\Omega$ compact, there exists a constant $\tilde{c}>0$ (only depending on $\Omega$) such that $\frac{1}{\cosh^2 d_1 \cosh^2 d_2}\geq \tilde{c}$.
Thus by (\ref{eq_A^2+B^2}) and (\ref{eq11}), it follows that
$\frac{\sqrt{A^2+B^2}}{d}\geq c$ for $c=\sqrt{\frac{\tilde{c}}{2}}$. This concludes the proof of Proposition~\ref{prop_claim_1_2}. \end{proof}

\begin{proof}[Proof of Theorem \ref{mainthm} in the negative curvature case:] 
From Proposition~\ref{prop_claim_1_2}.$(1)$, it follows that for all pairs of points $(p_1,p_2)\in(\Omega\times\Omega)\backslash \Diag$ and angle $\theta \in (0,\pi)$,
 $\Phi_\theta(p_1,p_2)=\frac{D(p_1,p_2)}{\dm(p_1,p_2)}\cos(\theta-\hat{\theta}(p_1,p_2))$ and hence
 \begin{equation}\label{sin_deriv}
\frac{\dm}{\dm \theta}\left( \tPi_\theta p_1 -\tPi_\theta p_2 \right)=- D(p_1,p_2)\sin(\theta-\hat{\theta}(p_1,p_2)).
\end{equation} 

Thus for all $l\in \N$, $\frac{\dm^l}{\dm \theta^l} \Phi_\theta(p_1,p_2)$ is an element of the set
\begin{equation}\label{deriv}
\left\{\pm \frac{D(p_1,p_2)\sin(\theta-\hat{\theta}(p_1,p_2))}{\dm(p_1,p_2)}, \pm  \frac{D(p_1,p_2)\cos(\theta-\hat{\theta}(p_1,p_2))}{\dm(p_1,p_2)} \right\}
\end{equation} 
Consequently, from Proposition~\ref{prop_claim_1_2}.$(2)$ it follows that $\Phi_\theta$ is $\infty$-regular and has bounded partial derivatives in the sense of Definition~\ref{def_PS}. Now let $c'>0$ such that $c'<\frac{c}{10}$, where $c$ is the constant from Proposition~\ref{prop_claim_1_2}.$(2)$. Assume that $|\Phi_\theta(p_1,p_2)|\leq c'$. Applying Proposition~\ref{prop_claim_1_2}, yields
\[  |\cos(\theta-\hat{\theta}(p_1,p_2)) |\leq c'\frac{\dm(p_1,p_2)}{D(p_1,p_2)}\leq \frac{c'}{c}<\frac{1}{10},\] 
and hence, $|\sin(\theta-\hat{\theta}(p_1,p_2)) |\geq \frac{1}{10}$. Now by (\ref{sin_deriv}), it follows that $\left|\frac{\dm}{\dm \theta}\Phi_\theta(p_1,p_2)\right|\geq \frac{c}{10}$. Thus the transversality property holds as well. Now, by applying Theorem~\ref{mainthm1}, Theorem~\ref{mainthm} follows for the case when $\Omega$ is a compact subset of $\Hyp^2$. As explained in 
Section 2, the statement of Theorem \ref{mainthm} in the negative curvature case follows from this. 
\end{proof}

\section{Projections in $\sphere^2$}

\subsection{Geodesic projections in $\sphere^2$}\label{secsetting_sph}

Let $\sphere^2$ denote the Euclidean two-sphere equipped with the usual spherical metric $\dm$. Let $p$ be a fixed base point in $\sphere^2$, $m\in (0,\frac{\pi}{2})$ a fixed number and denote by $B(p,m)$ the open ball of radius $m$ centered at $p$.  Let $v_0$ be a vector of length $1$ in the tangent plane $T_p\sphere^2$ of $\sphere^2$ at $p$. We denote by $L_0^+$ the segment of the geodesic starting at $p$ in direction $v_0$ that is contained in $B(p,m)$. Analogously, denote by $L_0^-$ the segment of the geodesic starting at $p$ in the direction $-v_0$ that is contained in $B(p,m)$. This defines the geodesic segment $L_0=L_0^+\cup L_0^- \subset B(p,m)$ through $p$.
For an angle $\theta \in (0,\pi)$ let $v_\theta$ be the unique vector of length~$1$ in $T_p\sphere^2$ such that the counter-clockwise angle from $v_0$ to $v_\theta$ is $\theta$. 
Let $L_\theta^+$ be the segment of the geodesic starting from $p$ in direction $v_\theta$ that is contained in $B(p,m)$. Analogously define $L_\theta^-$ in direction $-v_\theta$. This defines the geodesic segment $L_\theta=L_\theta^+\cup L_\theta^-\subset B(p,m)$. Note that for each direction $v\in T_p\sphere^2$ there exists a geodesic line starting at $p$ in direction $v$ of length $\pi$. So the restriction onto $B(p,m)$ with $m<\frac{\pi}{2}$ might look too strong at this point. However, this restriction is crucial in order for our results to hold. We will explain this in more detail in the last section.\medskip

Let $\Omega\subset \sphere^2$ be a compact set that is contained in $B(p,m)$. Then, due to the restriction $m<\frac{\pi}{2}$, the orthogonal projection $P_\theta$ of $\Omega$ onto the geodesic line segment $L_\theta$ is well-defined by, 
\begin{equation*}
\dm(q,P_\theta q) = \inf\{\dm(q,q'):q'\in L_\theta \}.
\end{equation*}
(See \cite{BH1999}, pages 176-178.) By the same argument as in the hyperbolic plane, for a point $q\in\Omega$, the geodesic segment connecting $q$ to $P_\theta q$ is orthogonal to $L_\theta$. On the other hand $P_\theta$ is not $1$-Lipschitz. However, $P_\theta: \Omega\rightarrow L_\theta$, for all $\theta\in(0,\pi)$, still is a Lipschitz map for some constant that only depends on $m$.
\medskip

Define the generalized projection $\Pi$, analogously to (\ref{def_Pi_in_H}):
\begin{equation}\label{def_Pi_in_sph}
\Pi: (0,\pi)\times \Omega \rightarrow \R,\ \
(\theta, q)  \mapsto \Pi_\theta q:=\pm\dm(p,P_\theta q).
\end{equation}
It is immediate from this definition that \begin{equation}\label{compare_P_Pi_sph}
\dm(P_\theta p_1, P_\theta p_2)=\dm_{Eucl.}(\Pi_\theta p_1,\Pi_\theta p_2).
\end{equation}

In our considerations below we will use basic results of spherical trigonometry. The following formula is what we call the \emph{spherical law of cosines}, a proof can be found for example in \cite{BH1999} or \cite{Thurston1997}.\\
For a geodesic triangle with side lengths $a,b,c$, each $<\pi$, and opposite angles $\alpha, \beta, \gamma$, it holds that:
\begin{equation}\label{LoC1_sph}
\cos a= \cos b \cos c +\sin b \sin c \cos \alpha.
\end{equation} 

Applying the spherical law of cosines to a right-sided triangle twice, yields
\begin{equation}\label{LoC2_sph}
 \tan b =\tan c\cos \alpha,
\end{equation} 
where $\gamma=\frac{\pi}{2}$. (Note that (\ref{LoC2_sph}) can be proved similarly to (\ref{LoC2}).) For each point $q\in\Omega$, define the angle $\theta_q$ as in the hyperbolic plane (see above (\ref{calcproj})). Applying an argument similar to the proof of (\ref{calcproj}), yields that
\begin{equation}\label{pre-calcproj_sph}
\tan \Pi_\theta q=\tan(\dm(p,q))\cos(\theta-\theta_q).
\end{equation}
Motivated by (\ref{pre-calcproj_sph}), we define a new family of generalized projections:
\begin{equation}\label{def_Pi_in_Sph_new}
\tilde{\Pi}: (0,\pi)\times \Omega \rightarrow \R, \ \
(\theta, q)  \mapsto \tPi_\theta q:=\tan(\dm(p,q))\cos(\theta-\theta_q). 
\end{equation} (Compare (\ref{pre-calcproj}) and (\ref{def_Pi_in_H_new}).) Note that for all $\theta\in(0,\pi)$ and $q\in\Omega$, \begin{equation}\label{compare_Pi_tPi_Sph}
\tPi_\theta=\tan(\Pi_\theta)\ .
\end{equation}
Thus, $\tPi$ is continuous with respect to $\dm$ and for all $\theta\in(0,\pi)$, $\tPi_\theta$ is Lipschitz, for some Lipschitz constant that only depends on $m$.\par

Now for all angles $\theta\in (0,\pi)$ and all pairs of distinct points $p_1,p_2\in \Omega$ define,
\begin{equation*}
\Phi_\theta(p_1,p_2)=\frac{\tPi_\theta p_1 - \tPi_\theta p_2}{\dm(p_1,p_2)}.
\end{equation*}

\subsection{Transversality and regularity properties in $\sphere^2$}\label{sec_trans_sph}

We will now show that Definition~\ref{def_PS} is satisfied in the setting described in Section~\ref{secsetting_sph}.\par

\begin{prop}\label{prop_claim_1_2_sph}
There exist two functions
\begin{eqnarray*}
D \ : \ (\Omega\times\Omega)\backslash \Diag &\rightarrow & \R_+\\
\hat{\theta} \ : \ (\Omega\times\Omega)\backslash \Diag &\rightarrow & [0,2\pi),
\end{eqnarray*}
such that:
\begin{enumerate}
\item For all pairs of points $(p_1,p_2)\in(\Omega\times\Omega)\backslash \Diag$ and angle $\theta \in (0,\pi)$, \begin{equation*}
\tPi_\theta p_1 -\tPi_\theta p_2 = D(p_1,p_2)\cos(\theta-\hat{\theta}(p_1,p_2)).\end{equation*}
\item Moreover, there exist constants $c>0$ and $C>0$, such that for all $(p_1,p_2)\in(\Omega\times\Omega)\backslash \Diag$ 
\begin{equation*}
c\leq  \frac{D(p_1,p_2)}{\dm(p_1,p_2)}\leq C.
\end{equation*}
\end{enumerate}
\end{prop}

\begin{proof}[Proof of Proposition~\ref{prop_claim_1_2_sph}.]
Let $(p_1,p_2)\in(\Omega \times \Omega)\backslash\Diag$. Throughout this proof, we will use the following notation: \begin{equation}\label{not_d_tilde_sph}
d_1=\dm(p,p_1),\ d_2=\dm(p,p_2),\ d=\dm(p_1,p_2),\ \tilde{d_1}=\tan \dm(p_1,p),\ \tilde{d_2}=\tan \dm(p_2,p).
\end{equation}
Moreover, we denote the counter-clockwise angle from $L_0^+$ to the geodesic segment connecting $p$ to $p_1$ (resp. $p_2$) by $\theta_1$ (resp. $\theta_2$). With this notation, the proof of Proposition~\ref{prop_claim_1_2_sph}.$(1)$ is similar to the proof of Proposition~\ref{prop_claim_1_2}.$(1)$.

In order to prove Proposition~\ref{prop_claim_1_2_sph}.$(2)$ it suffices to show that $c\leq \frac{\sqrt{A^2+B^2}}{d}\leq C$, for constants $c>0$ and $C>0$ independent of $p_1$ and $p_2$. Recall that $A$ and $B$ are defined as \begin{equation}\label{AB_s}
A = \tilde{d_1} \cos \alpha_0 - \tilde{d_2}\ \tand \ 
B = \tilde{d_1} \sin\alpha_0,
\end{equation} 
where $\alpha_0=\theta_1-\theta_2$, see (\ref{eq2}) and (\ref{AB}). \par 

By the spherical law of cosines (\ref{LoC1_sph}), it holds that
\begin{equation*}
\cos d=\cos d_1\cos d_2+\sin d_1\sin d_2\cos\alpha_0.
\end{equation*}  
Since $d_1$ and $d_2$ are both strictly smaller than $\frac{\pi}{2}$, $\cos d_1 \cos d_2 \neq 0$, and we obtain \begin{equation}\label{eq8_s}
-2\tan d_1 \tan d_2\cos\alpha_0= 2\left( 1-\frac{\cos d}{\cos d_1 \cos d_2}\right).
\end{equation}
From (\ref{AB_s}), (\ref{eq8_s}) and elementary calculation rules for trigonometric functions it follows that \begin{equation}\label{eq9_s}
A^2+B^2 =\frac{\cos^2 d_1 + \cos^2 d_2-2 \cos d \cos d_1 \cos d_2}{\cos^2 d_1  \cos^2 d_2}.
\end{equation}

Using the fact that $d_1,d_2\in (0,\frac{\pi}{2})$ and thus $0<\cos d_1, \cos d_2<1$, we can derive the following lower bound for $A^2+B^2$:
\begin{equation*}
A^2+B^2\geq \frac{2\cos d_1\cos d_2-2 \cos d \cos d_1 \cos d_2}{\cos^2 d_1  \cos^2 d_2}= \frac{2(1-\cos d)}{\cos d_1\cos d_2}\geq 2(1-\cos d).
\end{equation*}
This implies that
\begin{equation}\label{eq10_s}
\frac{\sqrt{A^2+B^2}}{d}\geq \sqrt{2}\frac{\sqrt{1-\cos d}}{d}.
\end{equation}

The function $d\mapsto\frac{\sqrt{1-\cos d}}{d}$ is continuous on $(0,\infty)$ and $
\lim_{d\to 0^{+}} \frac{\sqrt{1-\cos d}}{d}=\frac{1}{\sqrt{2}}>0.$ 
Since $0<d<2m<\pi$, it follows that there exists a constant $c$, only depending on $m$, such that $\sqrt{2}\frac{\sqrt{1-\cos d}}{d}\geq c$. This together with (\ref{eq10_s}) proves the left-hand inequality in Proposition~\ref{prop_claim_1_2_sph}.$(2)$.

Now let us prove the right-hand inequality. We define $\rho=d_1-d_2$, thus by the triangle inequality $0<|\rho|\leq |d|<\pi$ and therefore $
\cos d \leq \cos \rho$. The following calculation only uses the definition of $\rho$ and elementary calculation rules for $\cos$::
\begin{equation*}\begin{split}
&\cos^2 d_1 + \cos^2 d_2-2 \cos d \cos d_1 \cos d_2\\ 
&= \ \cos^2 (d_2+\rho) + \cos^2 d_2 - 2 \cos d \cos (d_2+\rho) \cos d_2\\
&= \ \frac{1}{2}(\cos (2(d_2+\rho))+1) + \frac{1}{2}(\cos (2d_2)+1) - \cos d (\cos (2 d_2 +\rho)+\cos  \rho)\\
&=\  1+ \frac{1} {2}(\cos (2(d_2+\rho))+\cos (2d_2)) - \cos d (\cos (2 d_2 +\rho)+\cos \rho)\\
&= \ 1+ \cos (2 d_2+\rho)\cos  \rho  -\cos d (\cos (2 d_2 +\rho)+\cos \rho)\\
&= \ 1- \cos d \cos \rho +(\cos \rho-\cos d ) \cos (2 d_2+\rho)\\ 
&\leq  \ 1-\cos d \cos \rho + (\cos\rho -\cos d)
\ \leq  \ 2(1- \cos d).\end{split}
\end{equation*}

Note that $\ 2(1- \cos d)
\ \leq  \ d^2$ for $0<d<2m<\pi$. Consequently, the estimate, 
\begin{equation}\label{eq11_s}
\cos^2 d_1 + \cos^2 d_2-2 \cos d \cos d_1 \cos d_2\leq d^2
\end{equation} 
follows. Recall that $d_1,d_2<m$. Set $C=\frac{1}{\cos^4 m}$, then $\frac{1}{\cos^2 d_1 \cos^2 d_2}\leq C$ and hence, by (\ref{eq9_s}) and (\ref{eq11_s}), we obtain $\frac{\sqrt{A^2+B^2}}{d}\leq C$.\end{proof}


\begin{proof}[Proof of Theorem \ref{mainthm} in the positive curvature case:] By applying Proposition \ref{prop_claim_1_2_sph} (analogously to the application of Proposition~\ref{prop_claim_1_2} in the proof of Theorem~\ref{mainthm} in the negative curvature case)  and Theorem~\ref{mainthm1}, Theorem~\ref{mainthm} follows for the case of the projections on the Euclidean sphere $\sphere^2$. As explained in 
Section 2, the statement of Theorem \ref{mainthm} in the positive curvature case follows. 
\end{proof}

\section{Final remarks} 

It is clear that the compactness of $\Omega \subset M_K$ is not an essential condition in 
Theorem \ref{mainthm} in the case when 
$K< 0$. Indeed, any set $A\subset M_K$ can be included in a countable union of compact subsets $\{\Omega_k\}_{k\in \N}$. 
Applying the statements of the theorem for $A \cap  \Omega_k$ they follow for $A$ as well. \medskip

By a similar argument it can be shown that also in the case of $K>0$ the compactness of $\Omega$ is not essential for Theorem~\ref{mainthm} to hold. However, the restriction $\Omega \subset B(p,\frac{\pi}{2K})$ is essential. 
To see this let us consider the case of the sphere $\sphere^2$ in the standard $\R^3$ coordinate system. 
We choose the base point $p$ to be the intersection point of the equator with the positive $y$-axis and let $L_{0}$ be the equator which is a (closed) geodesic through $p$. 
By $L_\theta$ we denote the great circle that is obtained by rotating the equator by a positively oriented rotation around the $y$-axis by an angle $\theta$. We choose the point $q\in \sphere^2$ to be the north pole, $q= N$. Then, there is no unique projection point $P_{0} q$. 
Indeed, each point on the equator is at the same distance to the north 
pole. This means that the only natural extension of $P_{0}$ onto the entire sphere is a multivalued map at the point $q=N$. (Obviously, the same thing is true for the south pole $S$.)
In particular, this means that the measure and dimension of the set $\{N\}$ "explode" under the map $P_0$. On the other hand, there are sets that are dramatically decreased
in dimension under $P_0$: Consider a connected segment $I$ of the great circle $M=\{q\in\sphere^2:\dm(p,q)=\frac{\pi}{2}\}$ that does not contain the north and south poles. 
Then $P_0(I)$ contains only one point, which we will further on denote by $P_0(I)=\{p_0\}$. In particular, $P_0$ has shrunk a set of positive $\Hhh^1$-measure to a single point. 
If we assume in addition, that the segment $I$ is bounded away from the two poles, then there exists a small range of angles $(0,\epsilon)$, $\epsilon>0$, 
such that $P_\theta(I)$ is a one point set for all $\theta\in (0,\epsilon)$. In particular, this shows that Marstrand's theorem does not hold in this setting.
So both the upper and the (generic) lower bound for dimension distortion that hold in $B(p,\frac{\pi}{2})\subset\sphere^2$, fail on $\sphere^2$.

\medskip In fact, the set of angles, for which these exceptional phenoma occur, can be described quite precisely. 
We define the projection $P:[0,\pi)\times \sphere^2\rightarrow L_\theta$ to be the
multivalued map given by $P_\theta(q)=\{ l\in L_\theta: \dm(l,q)\leq\dm(l',q) \text{ for all } l'\in L_\theta\}$.
For  a set $A\subseteq \sphere^2$, we write $P_\theta(A)$ for $\bigcup_{q\in A}P_\theta(q)$. By $\langle\cdot,\cdot\rangle$ we denote the scalar product on $\R^3$. 
Then we can write $M=\{q\in\sphere^2:\langle p,q\rangle=0\}$. Note that on $\sphere^2\backslash M$ the mulitvalued projection $P_\theta$ (applied to points or sets) coincides with the one-valued projections studied in the previous sections. We thus mainly wish to study the projection of subsets of $M$.
\medskip 

For all points $q\in \sphere^2$ and angles $\theta\in[0,\pi)$, it holds that: $P_\theta(q)=L_\theta$ if and only if $\langle q,l \rangle =0$ for each $l\in L_\theta$. Also, 
$P_\theta(q)=\{p_0\}$ if and only if  $\langle q,l \rangle \neq 0$ for some $l\in L_\theta$. 
Note that for all angles $\theta$, there are exactly two points $q\in M$ that satisfy $\langle q,l\rangle =0$ for all $l\in L_\theta$. 
These are $q_\theta:=p \times v_\theta $ and $-q_\theta$, where $\times $ denotes the cross product in $\R^3$. 
Also, for all pairs $\{q,-q\}$ of antipodal points in $M$, there exists exactly one $v_\theta$,
such that $\langle q,v_\theta \rangle =0$. Thus there is a one-to-one correspondence between pairs $\{q,-q\}$ and vectors $v_\theta$ 
with $\theta\in[0,\pi)$. Since the assignment $\theta\mapsto v_\theta$ is unique,
this yields a one-to-one correspondence between pairs $\{q,-q\}$ and  angles $\theta\in[0,\pi)$.
We can consider $M$ to be a copy of $\sphere^1$  isometrically embedded in $\sphere^2$. Thus by identifying each point $q\in M$ with its antipodal point $-q$,
we obtain a new manifold $\tilde{M}$ (we might call it the real projective space of dimension $1$) that itself can be considered to be an isometric copy of $\sphere^1$. Let $u$ denote the projections map $u:M\rightarrow\tilde{M},\ q\mapsto[q]$. 
So the one-to-one correspondence between pairs $\{q,-q\}$ and  angles $\theta\in[0,\pi)$ can be written as a bijection $\psi:\tilde{M}\rightarrow [0,\pi)$, defined by: $\psi([q])=\theta$ if and only if $\langle q,v_\theta \rangle=0$.
The well-definedness of this mapping  follows from the above considerations. So do the following results: \medskip

Let $A\subseteq M$ and by $\tilde{A}$ denote the corresponding set in $\tilde{M}$, i.e. $\tilde{A}=u(A)$. 
 Then \[\{ \theta\in[0,\pi): P_\theta(A)=L_\theta\}=\{ \theta\in[0,\pi): \langle v_\theta,q\rangle=0 \text{ for some }q\in A\}=\psi(\tilde{A})\]
 and \[\{ \theta\in[0,\pi): P_\theta(A)=\{p_0\}\}=\psi(\tilde{M}\backslash \tilde{A}).\]
 Furthermore, 
 \[ \dim \{ \theta\in [0,\pi): P_\theta(A)=L_\theta\} = \dim (\tilde{A})=\dim(A) \] and 
 \[ \dim \{ \theta\in [0,\pi): P_\theta(A)=\{p_0\}\} = \dim (\tilde{M}\backslash \tilde{A}). \]

 Informally speaking, Marstrand's Theorem  says that for a large quantity of angles there is no loss in the dimension of the image of the projection. The above discussion indicates this happens even for set valued projections. It would be very interesting to study these phenomena in a more general context, e.g. for set valued projections in positively curved spaces (with not necessarily constant curvature).\medskip
 
In negatively curved spaces, e.g. in Cartan-Hadamard manifolds, closest point projections are always single valued. It would be of interest to prove results similar to Theorem~\ref{mainthm} in this more general setting. One way to approach this question could be by 
reducing the problem to the constant curvature case via appropriate comparison theorems. 
However, standard comparison theorems from Riemannian geometry, such as the Theorem of  Topogonov or Rauch are not strong enough to imply regularity and transversality properties of projections necessary for the Peres-Schlag theory. 

\bibliography{literature_proj_on_riem}
\bibliographystyle{abbrv}

\end{document}